\newcommand{\mf}{\mathfrak}
\newcommand{\mc}{\mathcal}
\newcommand{\R}{\mathbf R}
\newcommand{\C}{\mathbf C}
\newcommand{\Q}{\mathbf Q}
\newcommand{\Z}{\mathbf Z}
\newcommand{\p}{\mathfrak{p}}
\newcommand{\cc}{\mathfrak{c}}
\newcommand{\ff}{\mathfrak{f}}
\newcommand{\s}{{\sigma}}
\newcommand{\ok}{\mathcal{O}_K}
\newcommand{\oo}{\mathcal{O}}
\newcommand{\qm}{\Q_{(m)}}
\newcommand{\kf}{K_\ff}
\newcommand{\sm}[4]{{
  \bigl[\begin{smallmatrix}{#1}&{#2}\\{#3}&{#4}\end{smallmatrix}\bigr]}}
\newcommand{\sv}[2]{
  \genfrac[]{0pt}{1}{#1}{#2}}
\newcommand{\Gal}{\text{Gal}}
\newcommand{\gl}{\text{GL}}
\numberwithin{equation}{section}
\theoremstyle{plain}
\newtheorem{theorem}{Theorem}[section]
\newtheorem{proposition}[theorem]{Proposition}
\newtheorem{corollary}[theorem]{Corollary}
\newtheorem{theorem*}{Theorem}
\newtheorem{corollary*}{Corollary}
\theoremstyle{definition}
\newtheorem{example}[theorem]{Example}
\newtheorem{conjecture}[theorem]{Conjecture}
\newtheorem{remark}[theorem]{Remark}
\begin{document}

\title{Certain CM class fields with smaller generators}

\author{
\"Omer K\"u\c{c}\"uksakall{\i}\footnote{Middle East Technical University,
Department of Mathematics, 06800, Ankara, Turkey (komer@metu.edu.tr).}\ ,
Osmanbey Uzunkol\footnote{Carl von Ossietzky Universit\"at Oldenburg, Institut
f\"ur Mathematik, D-26111, Oldenburg, Germany (osmanbey.uzunkol@gmail.com).}
\footnote{Both authors are partially supported by a joint research project
funded by BMBF (01DL12038) and T\"ubitak (TBAG-112T011).}}

\date{\today}

\maketitle

\begin{abstract}

We introduce an algorithm that computes explicit class fields of an imaginary
quadratic field $K$ for a given modulus $\mathfrak{f}\subset\ok$ more
efficiently than the use of their classical counterparts. Therein, we prove the
fact that certain values of a simple quotient of Siegel $\phi$-function are
elements in the ray class field $K_{\mathfrak{f}}$ of $K$.
\end{abstract}

\section{Introduction}

Inspired by Kronecker-Weber theorem \cite{hilbert}, Hilbert's 12th problem
asks to generate the maximal abelian extension of a given number
field explicitly using singular values of an analytical function. This problem
can be regarded as finding a generalization of the exponential function
appearing in Kronecker-Weber theorem. In the case of imaginary
quadratic number fields $K$, Hilbert's 12. problem, also known as Kronecker's
Jugendtraum, has an affirmative answer. The first proof of this fact was given
by Hasse in 1927 in \cite{hasse}, and this was significantly simplified by
Deuring in 1958 in \cite{Deu58} using the theory of complex multiplication
(CM-Theory) of elliptic curves and their $j$-invariants. More precisely, as a
preliminary step one needs to construct the maximal unramified abelian extension
(Hilbert class field $H_{K}$) of the imaginary quadratic number field $K$ using
a suitable value of $j$-function. All other (ramified) class fields of $K$ are
constructed by adjoining suitable torsion values of Weber functions to the
Hilbert class field $H_{K}$, see for instance \cite{Sil94} for a rather modern
treatment. The preliminary step is not necessary in the case of rational
field $\Q$ due to the triviality of its class group.

Shimura's reciprocity law connects class field theory of imaginary quadratic
number fields and arithmetic of modular functions by means of bringing Artin's
reciprocity law and Galois theory of the field $\mathcal{F}$ of all arithmetical
modular functions together, see \cite[Chapter 6]{Sh71}. It enables us to find an
explicit action of absolute abelian Galois group of an imaginary quadratic field
on the singular values $g(\tau)$, where $g\in \mathcal{F}$ and $\tau$ is an
imaginary quadratic number lying in complex upper half plane $\mf{h}$. However,
computations are rather involved due to the presence of roots of unity in the
functions. 

CM-theory of elliptic curves plays also a vital role in construction of
elliptic curves (CM-construction) with prescribed properties, such as known
number of rational points, over finite fields. This CM-construction can be
realized much better by using suitable generators of Hilbert class fields (or
more generally ring class fields) having minimal polynomials with much smaller
coefficients than coefficients of the minimal polynomials of the values of
$j-$function used in the classical construction, see \cite{Gee01, EngMor09,
LePoUz09, Uz13} for more details. We refer to \cite{AtMr93} and \cite{Mor07} for
applications of CM-construction in primality proving, or \cite{BSS99},
\cite{BSS05} and \cite{FST06} for applications in group and pairing based
cryptography. 
 
Let $m\not\equiv 2\bmod 4$ be a natural number, and $a$ be an integer such that
$2\leq a\leq m-1$. Let further $\qm$ be the corresponding ray class field over
$\Q$. As is well known $\qm$ is the $m$-th real cyclotomic field $\Q(\zeta_m +
\zeta_m^{-1})$ and can be obtained by adjoining the values of $\psi(z) =
1-e^{2\pi i z}$ at rational numbers $a/m$ to $\Q$. Moreover, real cyclotomic
units are given by a nice expression, see \cite[p. 144]{Wa97}:

\begin{equation*}\label{eqn:cyclo}
 \zeta_m^{(1-a)/2}\frac{1-\zeta_m^a}{1-\zeta_m}=\pm\frac{\sin(\pi
a/m)}{\sin(\pi/m)} \in \oo_{\Q_{(m)}}^*.
\end{equation*}

There are analogues elliptic units in the in the case of imaginary quadratic
number fields. Our aim in this paper is to introduce an algorithm to compute
explicit class fields generated by these special units. Furthermore, we prove
that these units yield 'smaller' generators for certain class fields than their
classical counterparts. In the realm of Hilbert's 12. Problem, it is indeed the
raison d'\^etre of this paper to provide smaller primitive elements for class
fields in the ray class field $K_{\mathfrak{f}}$ over $K$, without any
restriction on the modulus $\mathfrak{f}$ of $K$, as special values of a single
analytical function, namely certain values of a simple quotient of Siegel
$\phi$-function.

By using Kronecker's limit formula, Ramachandra introduced in 1964
primitive generators given as a product of values of various analytical
functions, see \cite{Ram64}. This ray class invariants are not suitable for
explicit computations due to the presence of a very large product. A
variant of Schertz's conjecture (\cite[p. 386]{Sch97}), recently proven
in \cite[Remark~3.7, p.~424]{JKS11}, states that
\begin{equation}\label{eqn:jung}
 \phi(0, 1/N, \tau)^{12N/\gcd(6,N)}
\end{equation}
generates $K_{(N)}$ over $K$ if $K\neq \Q(\sqrt{-1}), \Q(\sqrt{-3})$ and $N\geq
2$. Under rather restrictive conditions, Bettner and Schertz showed in
\cite{BeSch01} that the expression
\begin{equation}\label{eqn:bettner}
\zeta\Theta=\zeta\prod_{i=1}^{s}\phi(u_i,v_i,\tau_i)^{n_i},
\end{equation}
with specially chosen $u_i,v_i\in\R$, $\tau_i\in\mf{h}$, $n_i\in\Z$ and a
suitable root of unity $\zeta$ turns out to be an element of $K_{(N)}$ in some
cases. They also conjectured that these elements are generators for $K_{(N)}$
over $K$ if $\zeta\Theta\in\kf$. 

One important advantage of ray class invariants introduced in this paper is that
a factor of $r(N):=12N/\gcd(6,N)$ can be gained by using a suitable simple
quotient of Siegel $\phi$-function heuristically in comparison to the
generators in
(\ref{eqn:jung}). We have also smaller elements of $K_{\mf{f}}$ compared to the
elements in the expression (\ref{eqn:bettner}). Another main advantage is that
we have no restriction on the ideal $\mathfrak{f}$ of $\mathcal{O}_K$ in
comparison to both cases (\ref{eqn:jung}) and (\ref{eqn:bettner}). In
particular, we can pick a conductor $\ff$ that is not necessarily generated by a
natural number $N$ unlike the expression (\ref{eqn:jung}).

\section{Shimura's Reciprocity Law}

Let $K$ be an imaginary quadratic number field of discriminant $d_K$, and
$\mathcal{O}_K$ be its ring of integers. We formulate the results in this paper
for the case of complex multiplication by the maximal order $\mathcal{O}_K$. For
simplicity we denote the maximal order $\mathcal{O}_K$ by $\mathcal{O}$ for a
fixed $K$.   
  
In this section, we present  an explicit version of Shimura's
reciprocity law introduced by Gee and Stevenhagen, see \cite{Gee01}
and \cite{Stev01}, respectively. Unless otherwise stated or proved, the
assertions of this section can be found in \cite{lang}, \cite{stark} and 
\cite{Sh71}. Moreover, we introduce the definition and transformation
formulas of Siegel $\phi$-function. For a detailed treatment, we refer to the
Stark's paper \cite{stark}. 

\subsection{Class Field Theory}

Let $\mbox{Cl}(K)$ be the ideal class group of $K$ and $H$ be the Hilbert class
field of $K$. We denote by $[\cdot, K]$ the Artin map on the group of finite
$K-$id\`eles $\widehat{K^{*}} = \left(\prod_{p}^{'}K \otimes_{\Q}
\mathcal{O}_{p} \right)^* = (K\otimes_ { \Z } \widehat { \Z } )^*$, i.e.
id\`eles as quotient of the full id\`ele group obtained by forgetting the
infinite component $\C^*$. In this case, we can summarize the main theorem of
class field theory simply by the following exact sequence (see \cite[p.
115]{Sh71} for the general case and \cite[p. 165]{Stev01} for imaginary
quadratic number fields):
\begin{equation}\label{eqn:CFTim}
\begin{xy}
\xymatrix{
 1\ar[r] &  K^{*}\ar[r] & \widehat{K^{*}} \ar[r]^(0.3){[\cdot, K]} &
\mbox{Gal}(K^{ab}/K) \ar[r] & 1, }
\end{xy}
\end{equation}
where $K^{ab}$ denotes the maximal abelian extension of $K$. The unit group
$$\widehat{\mathcal{O}^{*}} = \left(\lim_{\leftarrow N}(\oo/N\oo)\right)^* =
(\oo\otimes_{\Z}\widehat{\Z})^* \subseteq \widehat{K^{*}}$$ of the profinite
completion $$\widehat{\mathcal{O}} = \lim_{\leftarrow N}(\oo/N\oo) =
\mathcal{O}\otimes_{\Z}\widehat{\Z}$$ of the maximal order $\mathcal{O}$ inside
$\widehat{K^{*}}$ is the preimage of the Artin map of $\mbox{Gal}(K^{ab}/H)$
(see \cite[p. 165]{Stev01}). This implies that we obtain the following exact
sequence
by class field theory: 
\begin{equation}\label{eqn:RingclassExact}
\begin{xy}
\xymatrix{
 1\ar[r] &  \mathcal{O}^{*}\ar[r] & \widehat{\mathcal{O}^{*}}
\ar[r]^(0.3){[\cdot, K]} & \mbox{Gal}(K^{ab}/H) \ar[r] & 1.}
\end{xy}
\end{equation}

\subsection{Modular Functions}

Let $\tau$ be an element in $\mf{h}\cap K$ having minimal polynomial
$Ax^2+Bx+C$ such that $B^2-4AC=d_K$. The $j$-function is invariant under
$\Gamma:=\mbox{SL}(2,\Z)$, and the main theorem of complex multiplication
implies that $j(\tau)$ generates the Hilbert class field $H$ over $K$.  

In order to generate other (ramified) abelian extensions of $K$, one can use
modular functions.  A modular function of level $N$ is defined as a
meromorphic function on $\mf{h}$, which is invariant under the congruence
subgroup $\Gamma(N)=\mbox{ker}[\mbox{SL}(2,\Z)\rightarrow \mbox{SL}(2,\Z/N\Z)]$
of $\Gamma$. A modular function of level $N$, whose $q$-expansions at every cusp
have coefficients in $\Q(\zeta_N)$, is called arithmetical. The field of all
arithmetical modular functions of level $N$ is abbreviated by $\mathcal{F}_{N}$.
In particular, $\mathcal{F}_1=\Q(j).$ From now on we use the term modular
function instead of arithmetical modular function for simplicity.
\begin{remark}
 As a consequence of the main theorem of complex multiplication, we have the
property that for every modular function $g\in\mathcal{F}_N$, the value
$g(\tau)$, if finite, is contained in the ray class field $K_{(N)}$
of $K$ with conductor $(N)$, see for instance \cite[p. 41]{Gee01}.
\end{remark}

One can show that $\mc{F}_N$ is a Galois extension of $\mc{F}_1$ with
\begin{align*}
 \Gal(\mc{F}_N/\mc{F}_1) & \cong \mbox{SL}(2,\Z/N\Z)/\{\pm
I_{2}\}\rtimes(\mathbb{Z}/N\mathbb{Z})^{*} \\
&\cong \gl_2(\Z/N\Z)/\pm I_2,
\end{align*}
where $I_2$ denotes the $2\times 2$ identity matrix. The action of this Galois
group on modular
functions $\mc{F}_N$ can be described easily. If $A\in\Gamma$, then we have
\[f(z) \circ A = f(Az),\]
and if $A = \sm{1}{0}{0}{d}$ for $d\in\Z$ with $(d,N)=1$, then
\[ f(z) \circ A =  \left( \sum_{n=n_0}^\infty \alpha_n q_N^n \right) \circ A =
\sum_{n=n_0}^\infty \alpha_n^{\s} q_N^n, \]
where $\s$ is the automorphism of $\Q(\zeta_N)/\Q$ given by $\zeta_N^\s =
\zeta_N^d$.

In order to describe the id\`elic interpretation of modular
functions of all levels, Gee considers the following diagram of exact sequences
\cite[p. 10]{Gee01}:
\begin{equation}\label{eqn:galFN1}
\begin{xy}
\xymatrix{
1 \ar[r] & \{\pm 1\} \ar[d] \ar[r] & \mbox{SL}(2,\Z/N\Z) \ar[d] \ar[r] &
\mbox{Gal}(\mathcal{F}_{N}/\mathcal{F}_{1}(\zeta_{N})) \ar[d] \ar[r] & 1
\\
1  \ar[r] & \{\pm 1\} \ar[d] \ar[r] & \mbox{GL}(2,\Z/N\Z) \ar[d]^{\det} \ar[r] &
\mbox{Gal}(\mathcal{F}_{N}/\mathcal{F}_{1})\ar[d] \ar[r] & 1 \\
1 \ar[r] & 1 \ar[r] & \left(\Z/N\Z\right)^{*} \ar[r] & \mbox{Gal}(\mathcal{F}_{1}(\zeta_{N})/\mathcal{F}_{1}) \ar[r] & 1.  
}
\end{xy}
\end{equation}
Let $\mathcal{F}=\cup_{N\geq 1}\mathcal{F}_{N}$ be the field of all 
modular functions. We can describe the Galois group of $\mathcal{F}$ over
$\mathcal{F}_{1}$ simply by taking the projective limit of (\ref{eqn:galFN1}):
 \begin{equation}\label{eqn:galFN}
\begin{xy}
\xymatrix{
 1\ar[r] &  \{\pm 1\}\ar[r] & \mbox{GL}(2,\mathbb{\widehat{Z}}) \ar[r] &
\mbox{Gal}(\mathcal{F}/\mathcal{F}_{1}) \ar[r] & 1. }
\end{xy}
\end{equation}

\subsection{Reciprocity Law}

We follow the explicit version of Shimura's reciprocity law due to Stevenhagen,
see \cite{Stev01}. Reciprocity law of Shimura connects the exact sequences
(\ref{eqn:RingclassExact}) and (\ref{eqn:galFN}) with the following reciprocity
map
$h_\tau$:
\begin{equation}\label{eqn:rec}
\begin{xy}
\xymatrix{
1\ar[r] &  \mathcal{O}^{*} \ar[r] & \prod_{p}^{'}\mathcal{O}^{*}_{p} \ar[r]
\ar[d]^{h_{\tau}} & \mbox{Gal}(K^{ab}/H)\ar[r] & 1\\
1\ar[r] &  \{\pm 1\}\ar[r] & \mbox{GL}(2,\mathbb{\widehat{Z}}) \ar[r] & \mbox{Gal}(\mathcal{F}/\mathcal{F}_{1}) \ar[r] & 1,
}
\end{xy}
\end{equation}
where $h_\tau: \prod_{p}^{'}\mathcal{O}^{*}_{p} \rightarrow
\mbox{GL}(2,\mathbb{\widehat{Z}})$ sends the id\`ele $x\in
\prod_{p}^{'}\mathcal{O}^{*}_{p}$ to the transpose of the matrix representing
the
multiplication on $ \mathbb{\widehat{Z}}\cdot \tau +  \mathbb{\widehat{Z}}$ with
respect to the basis $[\tau, 1]$ when viewed as a free $
\mathbb{\widehat{Z}}-$module of rank 2. We have the following
explicit formula for the reciprocity map in (\ref{eqn:rec}):
\begin{equation}\label{eqn:recexp} 
h_\tau:\ x=sA\tau+t\mapsto \sm{t-Bs}{-Cs}{sA}{t}.
\end{equation}
Using the reciprocity map, we obtain an action of $\widehat{\mathcal{O}^{*}}$ on
the full arithmetic modular function field $\mathcal{F}$, \cite[p. 165]{Stev01},
via
\begin{equation*}\label{eqn:rec2}
 (g(\tau))^{[x^{-1}, K]}=(g^{h_\tau(x)})(\tau).
\end{equation*}
Stevenhagen reduces the the reciprocity law of Shimura to the exact sequences of
finite groups, \cite[p. 167]{Stev01}
\begin{equation*}\label{eqn:rec3}
 \begin{xy}
\xymatrix{
1 \ar[r] &\mathcal{O}^{*} \ar[r] & (\mathcal{O}/N\mathcal{O})^{*} \ar[r]
\ar[d]^{h_{\tau, N}}& \mbox{Gal}(K_{(N)}/H)\ar[r] & 1\\
1 \ar[r] &\{\pm 1\}\ar[r] & \mbox{GL}(2,\mathbb{Z}/N\mathbb{Z}) \ar[r] &
\mbox{Gal}(\mathcal{F}_{N}/\Q(j)) \ar[r] & 1.
}
\end{xy}
\end{equation*}
The image of $(\mathcal{O}/N\mathcal{O})^{*}$ under the reciprocity map
$h_{\tau,N}$  is the following subgroup of
$\mbox{GL}(2,\mathbb{Z}/N\mathbb{Z})$:

 \begin{equation}\label{eqn:imrec}
 \mathcal{W}_{N,\tau}=\left\{\sm{t-Bs}{-Cs}{sA}{t}\in
\mbox{GL}(2,\mathbb{Z}/N\mathbb{Z}):\ s,t\in \mathbb{Z}/N\mathbb{Z}\right\}.
\end{equation}

Let $\mbox{Cl}(d_K)$ be the form class group, i.e. the group consisting of
reduced binary quadratic forms. It is a well known fact 
that the form class group $\mbox{Cl}(d_K)$ and the ideal class
groups $\mbox{Cl}(K)$ are
isomorphic, see \cite[p. 50]{cox}. This isomorphism is given by mapping a
reduced binary quadratic form $Q=[a,b,c]$ to the ideal class
containing the fractional ideal generated by $\tau_Q=(-b+\sqrt{dk})/2a$ and $1$.
Gee \cite[Chapter 1]{Gee01} proved the following theorem:

\begin{theorem}\label{thm:gee}
Let $Q=[a,b,c]$ be a reduced binary quadratic form of discriminant $d_K$ and
$\tau_Q=(-b+\sqrt{dk})/2a$. Set $u_Q=(u_p)_p\in\prod_p\emph{GL}(2,\Z_p)$ with
\begin{itemize}
 \item For $d_K\equiv 0\bmod 4:$\\
$u_p=
\begin{cases}
 \sm{a\ }{b/2}{0}{1} & \text{ if } p\not|a,\\
 \sm{-b/2\ }{-c}{1}{0} & \text{ if }p|a \text{ and }p\not|c,\\
  \sm{-a-b/2\ }{-c-b/2}{1}{-1} & \text{ if } p|a \text{ and }p|c,\\
\end{cases}
$

 \item For $d_K\equiv 1\bmod 4:$\\
$u_p=
\begin{cases}
 \sm{a\ }{(b-1)/2}{0}{1} & \text{ if } p\not|a,\\
 \sm{-(b+1)/2\ }{-c}{1}{0} & \text{ if }p|a \text{ and }p\not|c,\\
  \sm{-a-(b+1)/2\ }{-c-(1-b)/2}{1}{-1} & \text{ if } p|a \text{ and }p|c.\\
\end{cases}
$
\end{itemize}
Let $g\in\mathcal{F}$ be a modular function. Then it holds
$$g(\tau)^{[x_Q^{-1}, K]}=g^{u_Q}(\tau_Q),$$
where $g$ is defined and finite at $\tau$, and $x_Q=(x_p)_p$ with\\
$$x_p=
\begin{cases}
 a & \text{ if } p\not|a,\\
 a\tau_Q& \text{ if }p|a \text{ and }p\not|c,\\
  a(\tau_Q-1) & \text{ if } p|a \text{ and }p|c.\\
\end{cases}
$$

\end{theorem}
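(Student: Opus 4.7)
By Shimura's reciprocity law expressed in diagram (\ref{eqn:rec}), any id\`ele $x\in\widehat{\mathcal{O}^{*}}$ acts on the value $g(\tau)$ through
$$g(\tau)^{[x^{-1}, K]} = g^{h_\tau(x)}(\tau).$$
Specialising to $x=x_Q$ reduces the theorem to the single identity $g^{h_\tau(x_Q)}(\tau) = g^{u_Q}(\tau_Q)$. My strategy is to produce, case by case in the definition of $x_Q$, a factorisation of $h_\tau(x_Q)$ in $\mbox{GL}(2,\widehat{\Z})$ that simultaneously encodes the matrix $u_Q$ and the transition from the base point $\tau$ to the canonical point $\tau_Q$ attached to $Q$.

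The first step is to compute $h_\tau(x_p)$ explicitly at each rational prime $p$ by feeding the local component $x_p\in\{a,\, a\tau_Q,\, a(\tau_Q-1)\}$ into formula (\ref{eqn:recexp}), i.e. $sA\tau+t\mapsto\sm{t-Bs}{-Cs}{sA}{t}$. To apply this formula one has to expand $\tau_Q=(-b+\sqrt{d_K})/(2a)$ in the $\widehat{\Z}$-basis $(\tau,1)$ using the minimal polynomial $Ax^2+Bx+C$ of $\tau$, and the resulting coordinates $(s,t)$ are integral precisely because of the divisibility pattern assumed in each case. In the case $d_K\equiv 1\pmod 4$ a half-integer correction is needed, which accounts for the shifts by $(b\pm 1)/2$ appearing in the formula for $u_p$.

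Next I would exhibit, in each of the cases, a factorisation $h_\tau(x_p)=\gamma_p\cdot u_p$ with $\gamma_p\in\mbox{SL}(2,\Z_p)$, where $u_p$ is the matrix given in the statement. Gluing the $\gamma_p$ across all primes produces an element $\gamma\in\mbox{SL}(2,\widehat{\Z})$, whose geometric role is to implement the change of basis, induced by multiplication by $x_Q^{-1}$, between the $\widehat{\mathcal{O}}$-lattice $\widehat{\Z}\tau+\widehat{\Z}$ and the (rescaled) lattice $\widehat{\Z}\tau_Q+\widehat{\Z}$. Combined with the right action of $\mbox{GL}(2,\widehat{\Z})$ on $\mathcal{F}$ from (\ref{eqn:galFN}), this yields
$$g^{h_\tau(x_Q)}(\tau)=g^{\gamma u_Q}(\tau)=g^{u_Q}(\tau_Q),$$
which is what is required.

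The main obstacle is the case analysis underlying the factorisation. There are six sub-cases (three divisibility patterns for $a,c$ in each of the two parity cases for $d_K$), and the situation $p\mid\gcd(a,c)$ is the most delicate: here neither $a$ nor $c$ is a $p$-adic unit, which is precisely why the third local representative $a(\tau_Q-1)$ is used and why $u_p$ has the nontrivial lower row $(1,-1)$. One must check that the local matrices $\gamma_p$ produced in these three patterns glue consistently into a single global element of $\mbox{SL}(2,\widehat{\Z})$, so that the geometric interpretation is not case-dependent and the final identity is legitimately obtained from a uniform decomposition in $\mbox{GL}(2,\widehat{\Z})$.
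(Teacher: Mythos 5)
The paper itself gives no proof of this theorem: it is quoted from Gee's thesis \cite[Chapter 1]{Gee01}, so your attempt has to be measured against the standard argument there (and in \cite{Stev01}). Your overall plan --- compute $h_\tau(x_p)$ locally from (\ref{eqn:recexp}), factor out $u_p$, and let the leftover factor carry $\tau$ to $\tau_Q$ --- is the right skeleton, but the factorisation you propose cannot exist, and this is a genuine gap rather than a technicality. You ask for $h_\tau(x_p)=\gamma_p\cdot u_p$ with $\gamma_p\in\mbox{SL}(2,\Z_p)$ and $u_p\in\mbox{GL}(2,\Z_p)$. At the primes $p\mid a$ the component $x_p$ is a local generator of the nontrivial ideal $\fa$ attached to $Q$, so its norm is divisible by $p$ (for instance $N(a\tau_Q)=ac$ and $N(a(\tau_Q-1))=a(a+b+c)$); hence $\det h_\tau(x_p)=N(x_p)\notin\Z_p^{*}$, whereas $\det(\gamma_p u_p)=\det u_p\in\Z_p^{*}$. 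The same obstruction resurfaces in your final line: a matrix $\gamma\in\mbox{SL}(2,\widehat{\Z})$ acts on $\mathcal{F}$ through $\Gal(\mathcal{F}/\mathcal{F}_1)$ as in (\ref{eqn:galFN}) and therefore fixes $j$, so applying your identity $g^{\gamma u_Q}(\tau)=g^{u_Q}(\tau_Q)$ to $g=j$ would force $j(\tau)=j(\tau_Q)$, which is false whenever $Q$ is not the principal form. No integral adelic matrix can move $\tau$ to $\tau_Q$; they lie in different $\mbox{SL}(2,\Z)$-orbits.

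The missing idea is that $x_Q$ is not a unit id\`ele, so the reciprocity map of diagram (\ref{eqn:rec}), defined only on $\prod_p'\mathcal{O}_p^{*}$, does not apply as it stands. One must extend $h_\tau$ to all of $\widehat{K^{*}}$, landing in $\mbox{GL}(2,\mathbb{A}_{\Q,f})$, invoke Shimura's reciprocity law in its full form (Shimura, Theorem 6.31), and then use the decomposition $\mbox{GL}(2,\mathbb{A}_{\Q,f})=\mbox{GL}(2,\Q)^{+}\cdot\mbox{GL}(2,\widehat{\Z})$ to write $h_\tau(x_Q)=u_Q\cdot m_Q$ with $u_Q\in\mbox{GL}(2,\widehat{\Z})$ the matrix of the statement and $m_Q$ a single \emph{global rational} matrix of determinant $N\fa=a$ satisfying $m_Q(\tau)=\tau_Q$. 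It is this rational factor, acting by fractional linear transformation on the argument rather than Galois-theoretically, that effects the passage from $\tau$ to $\tau_Q$; the case distinctions $p\nmid a$, $p\mid a\wedge p\nmid c$, $p\mid a\wedge p\mid c$ (and the parity of $d_K$) serve precisely to make $h_\tau(x_p)\,m_Q^{-1}$ integral and invertible at every $p$ simultaneously for one fixed $m_Q$. Your local computations via (\ref{eqn:recexp}) and the six-case bookkeeping are the correct ingredients, but they must feed into this mixed integral--rational decomposition, not a purely integral one.
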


In \cite[p. 418--420]{JKS11} the following theorem is proven:
\begin{theorem}\label{thm:jung}
 Assume that $K\neq\Q(\sqrt{-1}),\ \Q(\sqrt{-3})$ and $N>0$. Then there exists a
bijective map $\Psi$ with $$\Psi:\mathcal{W}_{N,\tau}/\{\pm I_2\}\times
\emph{Cl}(d_K)\longrightarrow \emph{Gal}(K_{(N)}/K)$$
where 
$$(\alpha, Q)\longmapsto (g(\tau)\mapsto g^{\alpha\cdot
u_Q}(\tau_Q))_{g\in \mathcal{F}_{N,\tau}},$$
and $\mathcal{F}_{N,\tau}$ is the set of modular functions
of level $N$, which are defined and finite at $\tau$. 
\end{theorem}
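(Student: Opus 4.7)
The plan is to combine the two tools already developed in this section: Shimura's reciprocity, which identifies $\Gal(K_{(N)}/H)$ with $(\oo/N\oo)^{*}/\oo^{*}$, and Gee's Theorem~\ref{thm:gee}, which provides explicit Artin lifts of the classes in $\Cl(d_K)\cong\Gal(H/K)$. Under the hypothesis $K\neq\Q(\sqrt{-1}),\Q(\sqrt{-3})$ we have $\oo^{*}=\{\pm 1\}$, and the explicit formula \eqref{eqn:recexp} shows $h_{\tau,N}(\pm 1)=\pm I_2$. Consequently the bottom exact sequence of the reciprocity diagram descends to an isomorphism $(\oo/N\oo)^{*}/\oo^{*}\xrightarrow{\sim}\mathcal{W}_{N,\tau}/\{\pm I_2\}$, and its corresponding action on values $g(\tau)$ is $g(\tau)\mapsto g^{\alpha}(\tau)$ for $\alpha\in\mathcal{W}_{N,\tau}/\{\pm I_2\}$.

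First I would check that $\Psi(\alpha,Q)$, defined by $g(\tau)\mapsto g^{\alpha u_Q}(\tau_Q)$ for $g\in\mathcal{F}_{N,\tau}$, extends to an automorphism of $K_{(N)}/K$. The idea is to represent $\alpha$ by a finite id\`ele $y\in\prod_p^{'}\oo_p^{*}$ with $h_{\tau,N}(y)\equiv\alpha\bmod\{\pm I_2\}$ and multiply with the id\`ele $x_Q$ of Theorem~\ref{thm:gee}: one then has $[(x_Q y)^{-1},K]=[y^{-1},K]\cdot[x_Q^{-1},K]$ in the abelian group $\Gal(K_{(N)}/K)$. Applying Shimura first gives $g(\tau)\mapsto g^{\alpha}(\tau)$ with $g^{\alpha}\in\mathcal{F}_N$, and then applying Theorem~\ref{thm:gee} to $g^{\alpha}$ yields $(g^{\alpha})^{u_Q}(\tau_Q)=g^{\alpha u_Q}(\tau_Q)$. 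Independence from the $\{\pm I_2\}$-ambiguity follows from the triviality of $-I_2$ on $\mathcal{F}_N$, and independence from the choice of the lift $y$ follows since Shimura's reciprocity is defined on cosets modulo $\oo^{*}$.

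With $\Psi$ landing in $\Gal(K_{(N)}/K)$, bijectivity follows from a cardinality comparison together with injectivity. By class field theory,
\begin{equation*}
|\mathcal{W}_{N,\tau}/\{\pm I_2\}|\cdot|\Cl(d_K)|=[K_{(N)}:H]\cdot[H:K]=[K_{(N)}:K].
\end{equation*}
Suppose $\Psi(\alpha,Q)=\Psi(\alpha',Q')$. Restricting to $H$ kills the $\mathcal{W}_{N,\tau}/\{\pm I_2\}$-part, since it lies in $\Gal(K_{(N)}/H)$; the classical isomorphism $\Cl(d_K)\cong\Gal(H/K)$ then forces $Q=Q'$. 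Hence $\alpha$ and $\alpha'$ induce the same element of $\Gal(K_{(N)}/H)$, and Shimura's reciprocity gives $\alpha=\alpha'$ in $\mathcal{W}_{N,\tau}/\{\pm I_2\}$.

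The main obstacle is the compatibility check in the second step: verifying that the combined matrix is $\alpha u_Q$ (with $\alpha$ on the left, not on the right), and that the value $g^{\alpha u_Q}(\tau_Q)$ really matches the composition $[y^{-1},K]\cdot[x_Q^{-1},K]$. This depends on the transpose convention built into \eqref{eqn:recexp}, on the care with which the id\`ele $x_Q$ in Theorem~\ref{thm:gee} is chosen so that $\tau_Q$ represents the class of $Q$, and on the observation that $\alpha\in\mathcal{W}_{N,\tau}$ is the image of $h_{\tau,N}$ only at the base point $\tau$, so the order of evaluation (act by $\alpha$ on $g$ first, then move to $\tau_Q$ via $u_Q$) is forced. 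Once this bookkeeping is sorted out, the rest is formal.
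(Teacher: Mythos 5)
Your proposal is correct and follows essentially the same route as the source the paper cites for this result (\cite[p.~418--420]{JKS11}): the paper itself gives no proof, and the argument there is exactly this combination of the Gee--Stevenhagen form of Shimura reciprocity for $\Gal(K_{(N)}/H)\cong\mathcal{W}_{N,\tau}/\{\pm I_2\}$ (using $\oo^{*}=\{\pm1\}$ for $K\neq\Q(\sqrt{-1}),\Q(\sqrt{-3})$) with the idèles $x_Q$ of Theorem~\ref{thm:gee}, followed by the counting-plus-restriction-to-$H$ argument for bijectivity. Your bookkeeping on the order of the matrices and the $\{\pm I_2\}$-ambiguity is the right thing to check and is handled correctly.
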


\subsection{Transformation formulas for Siegel phi-function}

Stark obtains elements in the ray class fields $K_{\ff}$ by evaluating
the modular 
function $\phi(u,v,z)$, Siegel $\phi$-function, at imaginary
quadratic numbers. 
Set $\gamma=uz+v$, for $u,v\in\R$ and $z\in\mf{h}$. Siegel $\phi$-function is defined by the infinite product
\begin{equation*}\label{phi}
\phi(u,v,z) = -i e^{\frac{\pi i z}{6}}e^{\pi i u \gamma}(e^{\pi i \gamma} -
e^{-\pi i \gamma}) \prod_{n=1}^{\infty} (1-e^{2\pi i(nz+\gamma)}) 
(1-e^{2\pi i(nz-\gamma)}).
\end{equation*}
\begin{proposition}\label{phiprop}
The function $\phi(u,v,z)$ satisfies the following transformation properties:
\begin{enumerate}
 \item $\phi(u,v+1,z) = -e^{\pi i u}\phi(u,v,z)$
 \item $\phi(u+1,v,z) = -e^{-\pi i v}\phi(u,v,z) $
 \item $\phi(u,v,z+1) = e^{\pi i/6} \phi(u,u+v,z)  $
 \item $\phi(u,v,-1/z) = e^{-\pi i/2} \phi(v,-u,z)  $
\end{enumerate}
\end{proposition}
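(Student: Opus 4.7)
The plan is to split the four identities into two groups. Parts~(1)--(3) follow by direct substitution in the infinite product definition, whereas part~(4) is a genuine modular transformation and will be handled via Jacobi's theta function.

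For~(1), the substitution $v\mapsto v+1$ sends $\gamma$ to $\gamma+1$, so every factor $(1-e^{2\pi i(nz\pm\gamma)})$ is unchanged and the product is preserved. The prefactor $e^{\pi i u\gamma}$ picks up $e^{\pi i u}$, while $e^{\pi i\gamma}-e^{-\pi i\gamma}$ gets an overall sign from $e^{\pm\pi i(\gamma+1)}=-e^{\pm\pi i\gamma}$, yielding $-e^{\pi i u}$. For~(3), $z\mapsto z+1$ shifts $\gamma$ to $\gamma+u$, which is exactly the Siegel parameter of $\phi(u,u+v,z)$; since $e^{2\pi i n(z+1)}=e^{2\pi i nz}$ the product is preserved, so only the $e^{\pi i z/6}$ in front contributes the factor $e^{\pi i/6}$. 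For~(2), $u\mapsto u+1$ shifts $\gamma$ to $\gamma+z$, and a re-indexing $n\mapsto n\pm 1$ of the two products drops one factor $(1-e^{2\pi i(z+\gamma)})$ and inserts one factor $(1-e^{-2\pi i\gamma})$; the dropped factor cancels against a matching factor produced in the prefactor expansion of $e^{\pi i(\gamma+z)}-e^{-\pi i(\gamma+z)}$, and the remaining algebra collapses to $-e^{-\pi i v}$.

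The real work is in~(4). My plan is to first rewrite $\phi$ in terms of the Jacobi theta function $\theta_{1}$ by matching the triple product expansion; a direct comparison yields the clean identity
\[
\phi(u,v,z)\;=\;e^{\pi i u\gamma}\,\frac{\theta_{1}(\gamma,z)}{\eta(z)},
\]
since the $q^{1/8}$ coming from $\theta_{1}$ and the $q^{-1/24}$ coming from $1/\eta$ reassemble into the required $e^{\pi i z/6}=q^{1/12}$. I then apply Jacobi's imaginary transformation
\[
\theta_{1}\!\left(\tfrac{w}{z},-\tfrac{1}{z}\right)\;=\;-i\,\sqrt{-iz}\;e^{\pi i w^{2}/z}\,\theta_{1}(w,z)
\]
with $w=vz-u$, together with $\eta(-1/z)=\sqrt{-iz}\,\eta(z)$, so that the two $\sqrt{-iz}$ factors cancel. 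The remaining step is to expand $(vz-u)^{2}/z=v^{2}z-2uv+u^{2}/z$ and combine this with the prefactor $e^{\pi i u(v-u/z)}$: the $u^{2}/z$ terms cancel, the cross terms collapse, and one is left with $e^{\pi i v(vz-u)}$, which is exactly the prefactor of $\phi(v,-u,z)$, times the constant $-i=e^{-\pi i/2}$.

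The main obstacle is the exponential bookkeeping in~(4) and the consistent choice of square-root branches for $\sqrt{-iz}$; if one wishes to avoid these subtleties entirely, it is also legitimate simply to cite Stark~\cite{stark}, where these transformation formulas are recorded in exactly the form used here.
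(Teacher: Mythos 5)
Your argument is correct, but it is a genuinely different route from what the paper does: the paper's entire proof is a one-line citation of Kronecker's second limit formula via Stark \cite[p.~207--208]{stark}, whereas you actually carry out the verification. Your treatment of (1)--(3) by direct substitution in the product (with the re-indexing $n\mapsto n\pm1$ and the cancellation of the $(1-e^{2\pi i(z+\gamma)})$ factor against the expanded prefactor in (2)) is exactly the standard elementary computation, and it checks out. For (4), your identity $\phi(u,v,z)=e^{\pi i u\gamma}\,\theta_1(\gamma,z)/\eta(z)$ is correct (the exponent $1/8-1/24=1/12$ does reproduce $e^{\pi i z/6}$), and with $w=vz-u$ the exponent bookkeeping closes exactly: $u\cdot\frac{w}{z}+\frac{w^2}{z}=\frac{w(u+w)}{z}=\frac{(vz-u)\,vz}{z}=vw$, so the factors combine to $e^{\pi i vw}$ on the nose (not merely modulo $2$), leaving the constant $-i=e^{-\pi i/2}$; the two $\sqrt{-iz}$ factors cancel provided the same (principal) branch is used in the $\theta_1$ and $\eta$ transformation laws, which you rightly flag. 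What your approach buys is a self-contained proof that makes the root-of-unity bookkeeping explicit --- which is in fact the content the paper later relies on when defining the homomorphism $\omega$ and invoking the Herglotz formula --- at the cost of importing the classical Jacobi imaginary transformation for $\theta_1$ and $\eta$; the paper's approach buys brevity by outsourcing everything to Stark, who derives these formulas from Kronecker's second limit formula rather than from theta identities.
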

\begin{proof}
This is a consequence of Kronecker's second limit formula, see for example
\cite[p.~207-208]{stark} for details.
\end{proof}

Let $N>1, s, t$ be integers with $(s,t,N)=1$. Suppose
that $u=s/N, v=t/N$ and $M=12N^2$. Then Siegel $\phi$-function
$\phi\left(u, v, z\right)$ is a modular function of level $M$, see \cite[p.
208]{stark}. We now consider the action of some basic matrices in
$\gl(2,\Z/M\Z)$ on modular functions $\phi(u,v,z)$. We first start with
$\text{SL}(2,\Z)$ which is generated by elements $T=\sm1101$ and $S=\sm0{-1}10$.
Define the multiplicative homomorphism $\omega: \text{SL}_2(\Z) \rightarrow
\langle\zeta_{12}\rangle$ which maps $T \mapsto e^{\pi i/6}$ and $S \mapsto
e^{-\pi i/2}$. This map is compatible with Proposition~\ref{phiprop}, and it
follows
\[ \phi(u,v,z) \circ A = \phi(u,v,Az) = \omega(A) \phi((u,v)A,z) \]
for an integral matrix $A$ with $\det(A)=1$. In particular,
$S^2=\sm{-1}{0}{0}{-1}$, and we have
\[\phi(u,v,z) = -\phi(-u,-v,z).\]
In general, we do not have to
decompose $A$ in terms of $S$ and $T$ to be able to compute the value of
$\omega(A)$. Given $A=\sm abcd\in\Gamma$, define
\begin{align*}
p_3(A)&=ac(b^2+1)+bd(a^2+1) \\
p_4(A)&=(b^2-a+2)c+(a^2-b+2)d+ad.
\end{align*}
Herglotz \cite{herg} gives the following formula:
\begin{equation}\label{herg}
 \omega(A) = \zeta_4^{p_4(A)} \zeta_3^{-p_3(A)}.
\end{equation}

At the next step, we compute the action of $\sm{1}{0}{0}{d}$ on $\phi(u,v,z)$,
where $d$ is an integer relatively prime to $M$. The function $\phi(u,v,z)$ has
coefficients in $\Q(\zeta_M)$, and the action of the automorphism $\s: \zeta_M
\mapsto \zeta_M^d$ on $\phi(u,v,z)$ is obtained by multiplying $v$ by $d$ except
$-i$ at the beginning. Note that $\s(-i) = -i(-1)^{(d-1)/2}$. Therefore 
\[ \phi(u,v,z) \circ \sm{1}{0}{0}{d} = \phi(u,vd,z)(-1)^{(d-1)/2}. \]
The action of $\sm{d}{0}{0}{1}= \sm{0}{-1}{1}{0} \sm{1}{0}{0}{d}
\sm{0}{-1}{1}{0}$ in $\gl(2,\Z/M\Z)$ can also be easily computed:
\begin{align}\notag
 \phi(u,v,z)\circ \sm{d}{0}{0}{1} & = \phi(u,v,z) \circ \sm{0}{-1}{1}{0}
\sm{1}{0}{0}{p} \sm{0}{-1}{1}{0}\\ \notag
& = -i \phi(v,-u,z) \circ \sm{1}{0}{0}{d} \sm{0}{-1}{1}{0}\\ \notag
& = -i\phi(v,-ud,z) \circ \sm{0}{-1}{1}{0}\\ \notag
& = -\phi(-ud,-v,z)\\ 
\label{rouact} & = \phi(ud,v,z). 
\end{align}

\section{Elliptic Units}

Let $K$ be an imaginary quadratic field and let $\ff \subset \ok$ be a proper
ideal. In this section we give an algorithm to compute a complete set of
conjugates of a suitable simple quotient of values of Siegel $\phi$-function
over $K$. These special values turn out to be elliptic units in $\kf$. The
inspiration comes from Stark's basic result for $\ff=\p^s$ where $\p$ is a
degree one prime ideal coprime to $6d_K$ and $s\in\Z^{>0}$.
\cite[p.~229]{stark}. 

Suppose $f$ is the minimal positive integer divisible by the ideal $\ff\neq
(1)$. For each ideal class $\cc$ in $\mbox{Cl}_\ff=I_K(\ff) / P_{K,1}(\ff)$,
choose any ideal
$\mf{b}$ coprime to $\ff$ such that $\mf{a}\mf{b}$ is principal for all ideals
$\mf{a}$
in $\cc$. Furthermore, we assume that $\mf{a}\mf{b} = (\alpha)$ for some
$\alpha\in\oo$ and
$\mf{b}\ff = [\omega_1,\omega_2]$ where $\tau = \omega_1/\omega_2\in\mf{h}$. We
write $ \alpha = [u,v]\sv{\omega_1}{\omega_2}$, where $u,v$ are rational
numbers such that $fu$ and $fv$ are integers. The elements 
\begin{equation}\label{eqn:eunit}
 E(\cc) = \phi( u,v,\tau)^{12f} \in \kf
\end{equation}
depend only upon $\cc$ by \cite[Lemma 7]{stark}. The argument $\tau$ corresponds
to a fractional ideal, and it can be transformed to $\tau_Q$ for some reduced
binary quadratic form $Q$. Without loss of generality, we can always pick
$\tau=\tau_Q$.  

Let $\p \subset \ok$ be a degree one prime ideal in the ideal class
$\cc$ of norm $|\p|=p$ with $(p,12f)=1$. Stark shows that $E(\cc)/E(1)^p$
is a $12f$-th power of a number in $\kf$. Moreover, if $\kf$ contains exactly
$W$ roots of unity, then $W|12f$ and $E(\cc)^W$ is a $12f$-th power of an
algebraic integer in $\kf$ \cite[Lemma~9]{stark}. Let $\sigma_{\cc}$ be the
element of $\mbox{Gal}(\kf/K)$ corresponding the ideal class $\cc$ under the
isomorphism $\mbox{Gal}(\kf/K)\cong \mbox{Cl}_\ff$. The action of
$\sigma_{\cc}$ on the roots of unity is given by
$\zeta_W^{\sigma_{\cc}} = \zeta_W^{d_{\cc}}$
where $d_\cc \equiv |p| \bmod{W}$. We define $e_{\cc} = W/(W,d_{\cc}-1)$.
Inspired by Stark, we find an element 
\begin{equation}\label{eqn:oran}
\left(\frac{E(\cc)}{E(1)}\right)^{e_{\cc}}
\end{equation}
of norm $1$ which is a $12f$-th power of an algebraic integer in $\kf$
by \cite[Theorem~1]{stark}. In particular, this process provides us
examples of elliptic units. 

We can choose an ideal class $\cc_0$ modulo $\ff$ such that its restriction to
the Hilbert class field $H$ contains the ideal $\ff$. For any choice of
$\mf{b}$, we have the property that $\mf{b}\ff$ is principle if $\mf{a}\mf{b}$
is principle. It follows that we can choose $u_1,v_1\in (1/f)\Z$ so that 
\begin{equation}\label{eqn:tau1}
  E(\cc_0)=\phi(u_1,v_1,\tau_1)^{12f}.
\end{equation}

If $\ff = (N)$, an ideal generated by a positive integer $N$, then we can choose
$\mf{b}=1$ in the above setting and start with the pair $[u_1,v_1]=[0,1/N]$ in
the expression (\ref{eqn:tau1}). Otherwise $[u_1,v_1]$ can be found by using
a suitable ideal $\mf{b}$ coprime to $\ff$. 

By Stark's version of reciprocity law \cite[p.~223]{stark}, we have 
\[ \sigma_{\cc}(E(\cc_0)) = E(\cc\cc_0) = \phi(u_\cc,v_\cc,\tau_\cc)^{12f}\]
for some $u_\cc,v_\cc\in (1/f)\Z$ and $\tau_\cc=\tau_Q$ for some $Q$. In this
case the restriction of $\cc$ to the Hilbert class field and the reduced
binary quadratic form $Q$ correspond to the same ideal class.

From now on we will focus on the case $\tau_\cc = \tau_1$ for simplicity.
Moreover, we will assume $e_\cc=1$, in order to construct minimal polynomials
with coefficients as small as possible. We refer to Example~\ref{nontrivial}
for a possible comparison of several cases. The conditions $\tau_\cc=\tau_1$
and $e_\cc=1$ hold if and only if $\cc$ is a class of principal ideals such that
$\sigma_\cc$ acts trivially on the roots of unity of $\kf$. Such a class
$\cc\neq 1$ exists if and only if $\kf\neq H(\zeta_W)$ by class field theory.

\begin{remark}
If the equality $\kf = H(\zeta_W)$ holds, then one can easily generate the ray
class field $\kf$ using class invariants of $H$ together with cyclotomic
elements. Hence, from this point of view the condition $\kf\neq H(\zeta_W)$ is
not a restriction.
\end{remark}

\subsection{Explicit Conjugates}

Let $\cc$ be an ideal class modulo $\ff$ such that $\tau_\cc=\tau_1$ and
$e_\cc=1$.
We want to compute $[u_\cc, v_\cc]$ for a given $[u_1,v_1]$. Recall that
$W|12f$. Set $\ell = 12f/W$. Since $E(1)^W$ is a $12f$-the power of an element
in $\kf$, we find that 
\[ G(1):=\sqrt[\ell]{E(1)} = \phi(u_1,v_1,\tau_1)^W \cdot \zeta_{\ell}^* \in
\kf\]
for some $\ell$-th root of unity. Let $M=12f^2$. We can find an element
$\alpha\in\mc{W}_{M,\tau_1}$ whose restriction to $\kf$ corresponds to
the automorphism $\sigma_\cc$, and acts trivially on the extension
$\kf(\zeta_\ell)/\kf$.  Let $\widehat{\alpha}=\alpha-1$ be
the element in the group ring $\Z[G]$, where $G=\mc{W}_{M,\tau_1}/\{\pm I_2\}$.
Then, it follows that:
\[G(1)^{\widehat{\alpha}} = \left[
\frac{\phi(u_\cc,v_\cc,\tau_\cc)}{\phi(u_1,v_1,\tau_1)} \right]^W \in
\kf, \]
where $[u_\cc,v_\cc] = [u_1,v_1]\cdot\alpha$ and $\tau_\cc=\tau_1$ hold.
Furthermore, the condition $e_\cc=1$ implies that the above quotient is a $W$-th
power of an element in $\kf$ due to the equation (\ref{eqn:oran}). Since $\kf$
contains $W$-th roots of unity, we can fix that 
\[ \epsilon(\cc) := \frac{\phi(u_\cc,v_\cc,\tau_1)} {\phi(u_1,v_1,\tau_1)} \in
\kf, \]
which is well defined up to a $W$-th root of unity.

\begin{remark}
 At this point, we could also proceed with Stark's version of reciprocity law 
and obtain the same action, see for instance \cite{sakalli}. However id\`elic
interpretation introduced in the first section provides us with a better
treatment of the
subject. 
\end{remark}

Our purpose is now to compute $\epsilon(\cc)^{\sigma}$ for all $\sigma\in
\Gal(\kf/K)$. Find a pair $(\beta,Q) \in \mc{W}_{M,\tau_1} \times
\mbox{Cl}(d_K)$ such that whose restriction to $\kf$ corresponds to $\sigma$. By
Chinese remainder theorem one can compute the matrix $u_Q\bmod{M}$, see
\cite[p.~46]{Gee01}. By Theorem \ref{thm:gee} and Theorem \ref{thm:jung}, the
action of $\beta \cdot u_Q \in \mbox{GL}(2,\Z/M\Z)$ on Siegel $\phi$-function is
given explicitly. We have
\begin{equation*}\label{eqn:act1} \phi(u,v,\tau_1)^{(\beta,Q)} = \phi([u,v]\cdot
\beta \cdot
u_Q,\tau_Q) \cdot \zeta_Q 
\end{equation*}
for some $12$-th root of unity $\zeta_Q$ depending on $Q$ but not on $u,v$ and
$\beta$. The assumption $\tau_\cc = \tau_1 $ enables us to cancel $\zeta_Q$,
and simplifies computations considerably. As a result we have the following
formula
\begin{equation}\label{eqn:act2}\epsilon(\cc)^{\sigma} =
\frac{\phi([u_\cc,v_\cc] \cdot \beta \cdot u_Q,
\tau_Q)}{\phi([u_1,v_1] \cdot \beta \cdot u_Q, \tau_Q) }. 
\end{equation}
In the case that the assumption $\tau_\cc = \tau_1 $ does not hold, one
can still
perform similar computations. For a possible comparison, we refer to
Example~\ref{nontrivial}. 

We obtain the following algorithm which can be obtained immediately by using
the explicit action given in (\ref{eqn:act2}). 

 \vspace{0.3cm}
\noindent
\rule{\hsize}{0.5pt}
\textbf{Algorithm I: Computation of conjugates for $\epsilon(\cc)$}\\*[-1ex]
\rule{\hsize}{0.5pt}\\*[-2ex]

\textbf{Input:} The discriminant $d_K$ and the modulus $\ff$ of $K$.  

\textbf{Output:} A complete system of conjugates for $\epsilon(\cc)$ over $K$.

\begin{enumerate}
\item Compute $W$ and check if $K_\ff=H(\zeta_W)$.
\begin{itemize}
\item If YES, print: \texttt{Use class invariants together with roots of unity}
and return $0$.
\item If NO, find a class $\cc\neq 1$ in $\mbox{Cl}_\ff$ such that
$e_\cc=1$ and $\tau_\cc=\tau_1$. Go to the next step.
\end{itemize}
\item Compute $[u_1, v_1]$. Go to the next step.
\item Construct $\widehat{\alpha}$ and compute $[u_\cc, v_\cc]$. Go to the next
step.
\item Compute the list $[Q_1,\ldots,Q_m]$ of all reduced binary quadratic forms
in
$\mbox{Cl}(d_K)$ which is in one-to-one correspondence with $\mbox{Gal}(H/K)$,
and the list $[\beta_1,\ldots ,\beta_n]$ of matrices from $\mc{W}_{M,\tau_1} /
\{\pm I_2\}$ whose restriction to $\kf$ is in one-to-one correspondence with
$\mbox{Gal}(\kf/H)$. Go to the next step.
\item Compute the lists $[u_{Q_1},\ldots,u_{Q_m}]$ and $[\tau_{Q_1}, \ldots,
\tau_{Q_m}]$. Go to the next step.
\item for $1\leq i\leq n$, for $1\leq j\leq m$ 
\begin{itemize}
 \item Compute $$\epsilon(i,j)=\frac{\phi([u_\cc,v_\cc] \cdot \beta_i \cdot
u_{Q_j},
\tau_{Q_j})}{\phi([u_1,v_1] \cdot \beta_i \cdot u_{Q_j}, \tau_{Q_j}) }.$$
\end{itemize}
\item Return the matrix $\epsilon$.
\end{enumerate}
 \rule{\hsize}{0.5pt}\\*[-2ex]

We implemented this algorithm in PARI/GP, see \cite{PARI2}, for constructing
class fields and comparing our results with the existing ones. We can summarize
the construction above with the following theorem:

\begin{theorem}\label{thm:main}
Let $\mathfrak{f}$ be an ideal of $\oo$, and $f$ be the smallest positive
integer in $\mathfrak{f}$. Suppose that $\kf\neq H(\zeta_W)$, and let further
$\cc\neq 1$ be an ideal class of principle ideals such that $\sigma_\cc$ acts
trivially on $\zeta_W$. Then there exists an algorithm which computes numbers
$u_\cc,v_\cc, u_1,v_1 \in \frac{1}{f}\Z$ such that
\[\epsilon(\cc) =
\frac{\phi(u_\cc,v_\cc,\tau_1)} {\phi(u_1,v_1,\tau_1)}\in K_{\mathfrak{f}}.\] 
Moreover, there exists another algorithm which computes the complete system of
conjugates of $\epsilon(\cc)$ over $K$.
 \end{theorem}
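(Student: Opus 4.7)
The plan is to assemble the explicit construction carried out throughout this section into a formal statement; no new technical ingredient beyond what has been established is needed. First, I invoke the hypothesis $\kf\neq H(\zeta_W)$ together with the discussion preceding Algorithm~I to produce a nontrivial class $\cc\in\Cl_\ff$ of principal ideals such that $\sigma_\cc$ acts trivially on the $W$-th roots of unity in $\kf$. Next I choose an auxiliary class $\cc_0$ modulo $\ff$ whose restriction to the Hilbert class field is the class of $\ff$, and apply the definition (\ref{eqn:eunit}) to it. Because $\mf{b}$ can be selected so that $\mf{b}\ff$ is itself principal, expanding the corresponding generator in a $\Z$-basis of $\mf{b}\ff$ produces rational numbers $u_1,v_1\in\frac{1}{f}\Z$ with $E(\cc_0)=\phi(u_1,v_1,\tau_1)^{12f}$ as in (\ref{eqn:tau1}); when $\ff=(N)$ one may simply take $\mf{b}=\oo$ and $[u_1,v_1]=[0,1/N]$.

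The crucial step is to produce $[u_\cc,v_\cc]$. For this I appeal to the idelic form of Shimura's reciprocity law developed in the previous section and find a matrix $\alpha\in\mc{W}_{M,\tau_1}$, with $M=12f^2$, whose image in $\Gal(\kf/H)$ equals $\sigma_\cc$ and whose determinant is trivial modulo $\ell:=12f/W$, so that it acts trivially on $\zeta_\ell$. Setting $[u_\cc,v_\cc]:=[u_1,v_1]\cdot\alpha$ and applying $\widehat{\alpha}=\alpha-1$ to the well-defined element $G(1):=\sqrt[\ell]{E(1)}\in\kf$ gives
\[ G(1)^{\widehat{\alpha}} = \left[\phi(u_\cc,v_\cc,\tau_1)\big/\phi(u_1,v_1,\tau_1)\right]^W\in\kf. \]
Stark's Theorem~1, combined with the hypothesis $e_\cc=1$, ensures that the bracketed quotient is itself a $W$-th power in $\kf$, so a root $\epsilon(\cc)\in\kf$ can be extracted, well-defined up to a $W$-th root of unity. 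This yields the first assertion.

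For the second assertion I combine Theorem~\ref{thm:gee} with Theorem~\ref{thm:jung}: every element $\sigma\in\Gal(\kf/K)$ corresponds bijectively to a pair $(\beta,Q)\in\mc{W}_{M,\tau_1}/\{\pm I_2\}\times\Cl(d_K)$, and the action of $\beta\cdot u_Q\in\gl(2,\Z/M\Z)$ on $\phi(u,v,\tau_1)$ can be evaluated, via Proposition~\ref{phiprop} and formula~(\ref{rouact}), up to a $12$-th root of unity $\zeta_Q$ depending only on $Q$ and not on $u,v,\beta$. The decisive simplification is that $\tau_\cc=\tau_1$: numerator and denominator defining $\epsilon(\cc)^\sigma$ are both evaluated at $\tau_Q$, so the offending roots $\zeta_Q$ cancel and yield the clean formula~(\ref{eqn:act2}). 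The double loop in Algorithm~I over the pairs $(\beta_i,Q_j)$ then produces the full system of conjugates.

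The principal obstacle is the simultaneous realisation of the two properties demanded of the matrix $\alpha$: a lift in $\mc{W}_{M,\tau_1}$ covering $\sigma_\cc$ together with determinant $\equiv 1\pmod{\ell}$. The first is automatic from the surjectivity inherent in the reciprocity isomorphism; the second must be arranged within the kernel of the projection to $\Gal(\kf/H)$, and it is precisely here that the hypothesis $e_\cc=1$ — equivalently, the triviality of $\sigma_\cc$ on the group of $W$-th roots of unity inside $\kf$ — intervenes to guarantee that the two constraints are compatible.
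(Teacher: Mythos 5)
Your proposal is correct and follows essentially the same route as the paper, which itself presents Theorem~\ref{thm:main} as a summary of the preceding construction: the choice of $\cc_0$ and $[u_1,v_1]$ via (\ref{eqn:tau1}), the lift $\alpha\in\mc{W}_{M,\tau_1}$ with the determinant condition modulo $\ell$, the application of $\widehat{\alpha}=\alpha-1$ to $G(1)=\sqrt[\ell]{E(1)}$, the extraction of the $W$-th root using $e_\cc=1$ and Stark's results, and the computation of conjugates via Theorems~\ref{thm:gee} and~\ref{thm:jung} with the cancellation of $\zeta_Q$ under $\tau_\cc=\tau_1$. You correctly identify the compatibility of the two constraints on $\alpha$ as the point where the hypothesis that $\sigma_\cc$ acts trivially on $\zeta_W$ (equivalently $e_\cc=1$) enters, which matches the paper's reasoning.
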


In all our experiments, the value $\epsilon(\cc)$ appears to be indeed a
generator of $\kf$ over $K$. Moreover, the proof of \cite[Lemma~3.3]{JKS11}
suggests also that our quotient is a generator of $\kf/K$. Hence, we have the
following conjecture.

\begin{conjecture}\label{conj:ec}
The elliptic unit $\epsilon(\cc)$ generates $\kf$ over $K$ if the conditions of
Theorem~\ref{thm:main} hold.
\end{conjecture}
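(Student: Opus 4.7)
The plan is to prove that the stabilizer of $\epsilon(\cc)$ in $\Gal(\kf/K)$ is trivial; combined with the membership $\epsilon(\cc)\in\kf$ established in Theorem~\ref{thm:main}, this would finish the conjecture.

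Suppose $\sigma \in \Gal(\kf/K)$ fixes $\epsilon(\cc)$. By the bijection $\Psi$ of Theorem~\ref{thm:jung}, write $\sigma = \Psi(\beta, Q)$ for some $(\beta, Q) \in \mc{W}_{M,\tau_1}/\{\pm I_2\} \times \Cl(d_K)$. The explicit Galois action (\ref{eqn:act2}) recasts the fixing condition as the identity
\[
\frac{\phi([u_\cc, v_\cc]\,\beta\, u_Q,\ \tau_Q)}{\phi([u_1, v_1]\,\beta\, u_Q,\ \tau_Q)} \;=\; \frac{\phi(u_\cc, v_\cc, \tau_1)}{\phi(u_1, v_1, \tau_1)}.
\]
The goal is to extract from this identity that $(\beta, Q)$ lies in the kernel of $\Psi$, i.e.\ acts trivially on $\kf$.

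I would proceed in two stages. Stage one: restrict attention to the Hilbert class field. A suitable power of $\epsilon(\cc)$ clearing the $W$-th roots of unity equals the ratio $E(\cc\cc')/E(\cc')$ for appropriate classes $\cc'$; by \cite[Lemma~9]{stark} and the subsequent valuation computation, the prime factorization of $E(\cc)$ in the Hilbert class field $H$ is controlled by the class $\cc$. Comparing valuations at degree-one primes of $H$ above rational primes split in $K$ should pin down the image of $\sigma$ in $\Gal(H/K)$ and reduce us to the case in which $Q$ is the principal form, so that $\tau_Q = \tau_1$ and $u_Q$ may be absorbed into $\beta$. Stage two: with $\tau_Q = \tau_1$, the identity becomes a relation among four $\phi$-values at the same CM point. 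Using the product definition of $\phi(u,v,z)$ to read off the leading exponent as an explicit quadratic form in $(u,v)$, then expanding the product into a Fourier series in $q=e^{2\pi i \tau_1}$, one compares successive coefficients to deduce that $\beta$ must stabilize both of the pairs $[u_1,v_1]$ and $[u_\cc,v_\cc]$ modulo $\Z^2$, up to the sign ambiguity coming from $\{\pm I_2\}$.

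The main obstacle is stage two. Because the identity is a ratio, substantial cancellation occurs in the $q$-expansion and naive leading-term arguments are insufficient; one needs the analogue of \cite[Lemma~3.3]{JKS11}, whose proof exploits Kronecker's second limit formula to extract enough arithmetic information from the full $q$-expansion. The hypothesis $\kf \neq H(\zeta_W)$ together with the choice $\cc \neq 1$—so that $[u_\cc,v_\cc] \not\equiv \pm[u_1,v_1]\pmod{\Z^2}$—is what should exclude a common stabilizer in $\mc{W}_{M,\tau_1}/\{\pm I_2\}$; without it, $\epsilon(\cc)$ could collapse to a root of unity. Verifying this rigorously for an arbitrary modulus $\ff$, rather than only for the principal case $\ff=(N)$ handled in \cite{JKS11}, is precisely the open point, which is why the statement is recorded here only as a conjecture.
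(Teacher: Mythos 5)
The statement you are asked to prove is recorded in the paper only as a \emph{conjecture}: the authors offer no proof, just numerical evidence from their experiments and the observation that the proof of \cite[Lemma~3.3]{JKS11} ``suggests'' the result. So there is no argument in the paper to compare yours against, and your proposal, by its own admission in the final paragraph, is a strategy outline rather than a proof --- stage two is explicitly left open. That candour is appropriate, but it means the proposal does not establish the statement; it essentially rediscovers the reason the authors could not prove it. The genuinely open point is exactly where you locate it: \cite[Lemma~3.3]{JKS11} handles a single value $\phi(0,1/N,\tau)^{12N/\gcd(6,N)}$ for the modulus $\ff=(N)$, and its argument does not transfer to a \emph{quotient} of two $\phi$-values at a general modulus, where the cancellation you mention destroys the leading-term comparison.

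Beyond the admitted gap, stage one as written would fail. The element $\epsilon(\cc)$ is an elliptic \emph{unit}, and more to the point the two classes $\cc\cc_0$ and $\cc_0$ entering the quotient restrict to the same class in $\Cl(K)$ because $\cc$ consists of principal ideals; by \cite[Lemma~9]{stark} and the surrounding valuation computations, all conjugates of $\epsilon(\cc)$ then have identical (indeed zero, once $\ff$ has at least two prime divisors) valuations at every finite prime of $H$. Comparing valuations at degree-one primes of $H$ therefore carries no information about the image of $\sigma$ in $\Gal(H/K)$, and you cannot reduce to the principal form $Q$ this way. Any successful attack would have to separate conjugates by archimedean (absolute-value) data or by a genuinely different mechanism --- which is again the content of the conjecture, not a step toward proving it.
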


The following corollary follows immediately from Algorithm~I.

\begin{corollary}
Let the conditions of Theorem \ref{thm:main} hold, and Conjecture \ref{conj:ec}
be true. Then there exists an algorithm which computes the minimal polynomial
$h(x)\in\oo[x]$ of the generator $\epsilon(\cc)$ of $\kf$ over $K$.
\end{corollary}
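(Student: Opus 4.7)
The plan is to use Algorithm~I itself as the engine. Under Conjecture~\ref{conj:ec}, $\epsilon(\cc)$ is a primitive element for $\kf/K$, so the $[\kf:K]$ quantities $\epsilon(i,j)$ returned by Algorithm~I are precisely the pairwise distinct Galois conjugates of $\epsilon(\cc)$ over $K$. The candidate minimal polynomial is therefore
\[ h(x) \;=\; \prod_{i=1}^{n}\prod_{j=1}^{m}\bigl(x-\epsilon(i,j)\bigr), \]
and the entire task reduces to evaluating this product to sufficient accuracy and recognizing its coefficients as elements of $\oo$.

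First I would invoke Algorithm~I at a working floating-point precision $P$ to obtain the complex approximations $\epsilon(i,j)\in\C$, then expand the product above to get an approximation $\tilde h(x)\in\C[x]$. Since $\epsilon(\cc)$ is an elliptic unit and hence an algebraic integer, and since $\{\epsilon(i,j)\}$ is by construction stable under $\Gal(\kf/K)$, the true coefficients of $h(x)$ lie in $\oo$. Recognizing each complex coefficient as an element of $\oo$ is straightforward: writing a $\Z$-basis $\{1,\omega\}$ of $\oo$ (with $\omega=\tau_1$ or the standard generator of $\ok$), each coefficient of $\tilde h(x)$ is rounded against this basis in the real and imaginary parts, yielding a candidate $h(x)\in\oo[x]$. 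For numerically large coefficients one may prefer an LLL step against the lattice generated by $1$ and $\omega$, but the underlying identification is the same.

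The main obstacle is precision control: one needs an explicit a priori upper bound on the archimedean sizes of the conjugates $|\epsilon(i,j)|$, large enough to choose $P$ so that the rounding step is unambiguous. Such a bound can be obtained directly from the defining product of Siegel's $\phi$-function, using the standard estimate $|e^{2\pi i \tau_{Q_j}}|=e^{-2\pi\,\mathrm{Im}(\tau_{Q_j})}$ on the reduced domain together with the fact that each $u_\cc,v_\cc,u_1,v_1$ lies in $\frac{1}{f}\Z$; since the $\tau_{Q_j}$ range over finitely many explicit points in the fundamental domain, uniform upper and lower bounds for $|\phi(u,v,\tau_{Q_j})|$ are effectively computable, which in turn bounds $|\epsilon(i,j)|$ and the elementary symmetric functions appearing as coefficients of $h(x)$. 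Setting $P$ beyond this bound guarantees correctness of the reconstruction.

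Finally, to obtain a verified answer rather than a heuristic one, I would add a certification step: recompute $h(\epsilon(1,1))$ in the same precision and confirm that the result is smaller than the known precision loss, and verify that $h(x)$ is irreducible over $K$ (e.g.\ by checking it has no proper factor in $\oo[x]$ of matching degree bound). Together with Conjecture~\ref{conj:ec} this certifies that the output $h(x)\in\oo[x]$ is the minimal polynomial of $\epsilon(\cc)$ over $K$, completing the algorithm.
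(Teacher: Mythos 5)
Your proposal is correct and matches the paper's intent exactly: the paper offers no proof beyond the remark that the corollary ``follows immediately from Algorithm~I,'' meaning precisely that one forms $h(x)=\prod_{i,j}\bigl(x-\epsilon(i,j)\bigr)$ from the complete system of conjugates and recognizes the coefficients in $\oo$, which is what you do. Your added discussion of precision bounds, coefficient rounding, and certification fills in implementation details the paper leaves implicit, but the underlying argument is the same.
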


\begin{remark}
Let $\cc$ be an ideal class modulo $\ff=(N)$, and let further $h_\cc(x)$
and $h_\phi(x)$ be the minimal polynomials of $\epsilon(\cc)$ and
$\phi(0,1/N,\tau_1)^{12N/(6,N)}$ of \cite{JKS11} over $\Q$, respectively.
Furthermore, suppose that $\gamma_\cc$ and $\gamma_\phi$ be the logarithm of
maximum of absolute values of the coefficients of $h_\cc(x)$ and $h_\phi(x)$,
respectively.

We compare heuristically the values $\gamma_\cc$ and $\gamma_\phi$. We expect
that the reduction factor can be measured by the exponent $$r(N) :=
\frac{\gamma_\phi}{\gamma_\cc} \approx \frac{12N}{\gcd(6,N)}$$ for arbitrarily
large $N$.

This reduction factor gives significantly better results compared with the
analogues results derived from the gonality estimates of modular curves and
their relation to the celebrated Selberg's eigenvalue conjecture for class
invariants, i.e. generators of ring class fields. In that case, the reduction
factor is a constant bounded by $96$ conjecturally and by $100.82$ provably, see
\cite[p. 25]{BrSt08}, whereas our method yields a reduction factor depending
linearly on $N$ which turns out to be very efficient especially for the cases of
large conductor and large discriminant.
\end{remark}

\section{Examples}

\begin{example}\label{nontrivial}
Let $K=\Q(\sqrt{-91})$ and $\ff=(5)$. The form class group of the discriminant
$d_K=-91$ is given by 
\[\mbox{Cl}(d_K) = \{[1,1,23],[5,3,5]\}.\]
Moreover, we find that
\[ \mc{W}_{5,\tau_1}/\{\pm I_2\} = \left\{
\begin{array}{l}
\sm{1}{0}{0}{1}, \sm{2}{0}{0}{2}, \sm{-1}{-23}{1}{0}, \sm{0}{-23}{1}{1},
\sm{2}{-23}{1}{3}, \vspace*{5pt} \\  \sm{-2}{-46}{2}{0}, \sm{-1}{-46}{2}{1},
\sm{0}{-46}{2}{2}
\end{array}
\right\}. \]
Giving these two sets is equivalent to enumerate all ideal classes in the ray
class group by Theorem~\ref{thm:jung}. We may choose $[u_1,v_1]=[0,1/5]$. The
function $\phi(a/5,b/5,z)$ is modular of level $M = 12 \cdot 5^2$ provided that
$(5,a,b)=1$. The number of roots of unity in the ray class field $K_{(5)}$ is
given by $W=10$. It follows that $\ell = 12\cdot5/10=6$.

\textbf{Part 1:} Let $\cc_1$ be the ideal class corresponding to the pair
$$(\sm{-1}{-46}{2}{1} , [5,3,5]).$$ We find that $u_Q \equiv
\sm{293}{169}{276}{49} \bmod{M}$, where $Q=[5,3,5]$. The matrix $\alpha =
\sm{9}{-46}{2}{11}$ is an element  of $\mc{W}_{M,\tau_1}$ congruent to
$\sm{-1}{-46}{2}{1}$ modulo $5$, and the determinant of $\alpha \cdot u_Q$ is
congruent to $1$ modulo $\ell$. We compute that $\det(\alpha \cdot u_Q)\equiv 3
\bmod{W}$, and as a result obtain $e_{\cc_1}=5$. Now the calculation of
$[0,1/5] \cdot \alpha \cdot
u_Q$ gives us 
\[ \epsilon(\cc_1)=\left[\frac{\phi\left(\frac{3622}{5},\frac{877}{5},\tau_Q
\right)\cdot\zeta_Q}{\phi\left(0,\frac{1}{5},\tau_1 \right)}\right]^5
\in K_{(5)}.\]
Here $\zeta_Q$ is a $12$-th root of unity depending only on the matrix $u_Q$.
Its precise value can be found by the action of the matrix $u_Q$
on Siegel $\phi$-function $\phi(u,v,z)$. It requires computing a decomposition
of $u_Q$ in terms of $S,T$ and $\sm{1}{0}{0}{d}$. This computation can be
eliminated if we choose $\cc$ as in the following part.

\textbf{Part 2:} Now let $\cc_2$ be the ideal class corresponding to the
pair $$\left( \sm{-1}{-46}{2}{1} , [1,1,23] \right).$$ We find that $u_Q \equiv
\sm{1}{0}{0}{1} \bmod{M}$. Observe that $\alpha=\sm{-1}{-46}{2}{1}$ has
determinant congruent to $1$ modulo $\ell$. We find that $\det(\alpha \cdot
u_Q)=1$ and therefore $e_{\cc_2}=1$. Now the calculation of $[0,1/5] \cdot
\alpha \cdot u_Q$ gives us
\[ \epsilon(\cc_2)=\frac{\phi\left(\frac{2}{5},\frac{1}{5},\tau_1
\right)}{\phi\left(0,\frac{1}{5},\tau_1 \right)} \in K_{(5)}.\]

Let $h_1(x)$ and $h_2(x)$ be the minimal polynomials of $\epsilon(\cc_1)$ and
$\epsilon(\cc_2)$ over $\Q$, respectively. The absolute values of coefficients
of $h_1(x)$ is much larger than the absolute values of coefficients of $h_2(x)$
due to the presence of the exponent $5$. As a comparison we give the largest
values $c_1$ and $c_2$ of the absolute value of the coefficients of polynomials
$h_1(x)$ and $h_2(x)$, respectively:
$$c_1=14039306026984320878929721009202946,$$
$$c_2=910425.$$
As expected, we find that $\log(c_1)/\log(c_2) \approx 5.73015$.
\end{example}

\begin{example}
Let $\ff$ be a product of degree one prime ideals such that $(\ff,
6\bar{\ff})=1$. There exists an integer $t_1$ with $\tau_1 \equiv t_1
\bmod{\bar{\ff}}$ by Chinese remainder theorem. We have $\bar{\ff} =
(f,\tau_1+t_1)$. Consider the decomposition of the principal ideal $(\tau_1+t_1)
= \bar{\ff}\mf{a}$ for some $\mf{a}\in\cc_0$. Choosing $\mathfrak{b}=\bar{\ff}$,
we find that $E(\cc_0) = \phi( 1/f,t_1/f,\tau_1)^{12f}\in\kf$. Thus we can
choose $[u_1,v_1] = [1/f,t_1/f]$.

In this case, we have $\kf \cap K_{\bar{\ff}}=H$, and it follows that all roots
of unity in $\kf$ lie in $H$. Let $\cc\neq 1$ be any ideal class in
$\mbox{Cl}_\ff$ consisting of principal ideals. The action of $\sigma_\cc$ is
trivial on the roots of unity of $\kf$, since $\zeta_W\in H$. Set $M=12f^2$. For
a given $\sigma \in \mbox{Gal}(\kf/H)$, we can find a matrix $\alpha_a =
\sm{a}{0}{0}{a} \in \mc{W}_{M,\tau_1}$ whose restriction to $\kf$ corresponds to
$\sigma$. Recall that $\ell=12f/W$. Without loss of generality we assume that
$\det(\alpha_a)=a^2\equiv 1 \pmod{\ell}$. Then any conjugate of $\epsilon(\cc)$
can be found by using the formula
\begin{align*}
 \epsilon(\cc)^{(\beta,Q)} & = \frac{\phi( [\frac{1}{f},\frac{t_1}{f}]
\cdot \alpha_a \cdot \alpha_b \cdot u_Q,\tau_Q)}{\phi([\frac{1}{f},\frac{
t_1}{f}] \cdot\alpha_b \cdot u_Q,\tau_Q)} = \frac{\phi(
[\frac{ab}{f},\frac{abt_1}{f}] \cdot
u_Q,\tau_Q)}{\phi([\frac{b}{f},\frac{ bt_1}{f}] \cdot u_Q,\tau_Q)}
\end{align*}
where $(\alpha_b,Q)$ is any pair with $b\in (\Z/M\Z)^*$ and
$Q\in\mbox{Cl}(d_K)$. 
\end{example}

\begin{example}
In this example we compare the result of Algorithm~I with the example given in
\cite[Example~3.8]{JKS11}. Let $K=\Q(\sqrt{-10})$ and let $\ff=(6)$.
It turns out that $\kf$ has 24 roots of unity. We find that
\[ \mc{W}_{6,\tau_1}/\{\pm I_2\} = \left\{
\begin{array}{l}
\sm{1}{0}{0}{1}, \sm{1}{-10}{1}{1}, \sm{3}{-10}{1}{3}, \sm{5}{-10}{1}{5},
 \vspace*{5pt} \\ \sm{1}{-20}{2}{1}, \sm{3}{-20}{2}{3}, \sm{5}{-20}{2}{5},
\sm{1}{-30}{3}{1}
\end{array} \right\}. \]
Among the $8$ principal ideal classes only those two corresponding to $1$
and $2\sqrt{-10}+3$ have matrices with determinant congruent to $1$ modulo $24$.
We compute the value $\ell=12\cdot f/24=3$. Thus we shall use
$\alpha=\sm{3}{-20}{2}{3}\in \mc{W}_{M,\tau_1}$, where $M=12\cdot 6^2$. The
matrix $\alpha$ has determinant congruent to $1$ modulo $\ell$. It follows
that
\[ \epsilon(\cc)=\frac{ \phi(\frac{2}{6}, \frac{3}{6}, \tau_1)}{\phi(0,
\frac {1}{6}, \tau_1) } \]
is an element of $K_{(6)}$. Multiplying $\epsilon(\cc)$ with $\zeta_{12}^5 \in
K_{(6)}$, a real element with the minimal polynomial
\[ {\min}(\epsilon(\cc)\zeta_{12}^5,K)= \begin{array}{l}
x^{16} + 8x^{15} - 18x^{14} - 68x^{13} + 50x^{12} \\
+ 108x^{11} - 44x^{10} - 28x^9 + 63x^8  \\
- 28x^7 - 44x^6 + 108x^5 + 50x^4 - 68x^3 \\
- 18x^2 + 8x + 1\end{array}\]
can be found. This has much smaller coefficients than the following minimal
polynomial computed in \cite{JKS11}:
\begin{align*}
 \min\left(\phi^{12}\left(0,\frac{1}{6},\tau_1\right),K\right) =
\begin{array}{l}
x^{16} + 20560x^{15} - 1252488x^{14} - 829016560x^{13}\\ 
- 8751987701092x^{12} + 217535583987600x^{11} \\
+ 181262520621110344x^{10} + 43806873084101200x^9 \\
- 278616280004972730x^8 + 139245187265282800x^7 \\
- 8883048242697656x^6 + 352945014869040x^5 \\
+ 23618989732508x^4 - 1848032773840x^3 \\
+ 49965941112x^2 - 425670800x + 1.\end{array}
\end{align*}
Similar to the previous example, we compare the largest values $c_1$ and $c_2$
of the absolute values of the coefficients of polynomials $h_1(x)$ and $h_2(x)$,
respectively:
$$c_1=278616280004972730,$$
$$c_2=108.$$
As expected, we find that $\log(c_1)/\log(c_2) \approx 8.57913$.
\end{example}

\begin{example}
Let $K=\Q(\sqrt{-11})$ and $\ff=(9)$. We compare in this example the
result of Algorithm~I with the example given in \cite[Example~3]{BeSch01}.
Bettner and Schertz introduce an element $\Theta\in\kf$ with the following
minimal polynomial:
\begin{align*}
 \min\left(\Theta,K\right) & =
\begin{array}{l}
x^{18} + 9x^{17} + 36 x^{16} + (-8  \tau_1 + 91) x^{15} \\ 
+ (-78  \tau_1 + 150) x^{14} + (-294  \tau_1 + 45) x^{13} \\
+ (-492  \tau_1 - 479) x^{12} + (-120  \tau_1 - 1020) x^{11} \\
+ (816  \tau_1 - 327) x^{10} + (1068  \tau_1 + 1469) x^9 \\
+ (-18  \tau_1 + 1707) x^8 + (-882  \tau_1 - 357) x^7 \\
+ (-288  \tau_1 - 1523) x^6 + (516  \tau_1 - 345) x^5 \\
+ (390  \tau_1 + 540) x^4 + (2  \tau_1 + 219) x^3 \\
+ (-6  \tau_1 - 15) x^2  + (6  \tau_1 + 15) x + 1.
\end{array}
\end{align*}
On the other hand, we compute $\epsilon(\cc) = \phi(7/3,-2/9,\tau_1) /
\phi(0,1/9,\tau_1)$ with the minimal polynomial below:
\begin{align*}
 \min\left(\epsilon(\cc),K\right) & =
\begin{array}{l}
x^{18} + 3 x^{17} + (-6  \tau_1 + 3) x^{16} + (5  \tau_1 - 4) x^{15} \\
+ (-6  \tau_1 + 18) x^{14} + (3  \tau_1 - 3) x^{13}  \\
+ (-12  \tau_1 + 40) x^{12} + (-6  \tau_1 + 6) x^{11}  \\
+ (-15  \tau_1 + 63) x^{10} - 2 x^9 + (15  \tau_1 + 78) x^8   \\
+ (6  \tau_1 + 12) x^7 + (12  \tau_1 + 52) x^6 + (-3  \tau_1 - 6) x^5 \\
 + (6  \tau_1 + 24) x^4 + (-5  \tau_1 - 9) x^3 + (6  \tau_1 + 9) x^2 \\
+ 3 x + 1.
\end{array}
\end{align*}
Let $c_\Theta$ and $c_{\epsilon(\cc)}$ be the absolute values of the
coefficients with largest absolute value of $\min\left(\Theta,K\right)$ and
$\min\left(\epsilon(\cc),K\right)$, respectively. We find that $\log(c_\Theta)/
\log(c_{\epsilon(\cc)}) \approx 1.76212$. Note that this is not the only
advantage. For different modulus, it is possible to compute polynomials
with smaller coefficients using Algorithm~I than the polynomials in
\cite{BeSch01} heuristically. Furthermore, there is no restriction on the
underlying modulus $\ff$ whereas the method in \cite{BeSch01} is only applicable
in very restrictive cases, and rather complicated with possibly higher powers
when $N$ gets larger. 
\end{example}

\section{Possible Applications}

We list in this section possible further applications and generalizations of the
construction in Algorithm~I.  

First of all, it could be interesting to investigate whether the set of elliptic
units $\epsilon(\cc)$ constructed by Algorithm I together with the units
from Hilbert class field generate a subgroup of the unit group $U(\oo_{\kf})$
of finite index. 

Secondly we aim at investigating the result of Klebel, \cite{Kle96}, to find
power integral basis of class fields over Hilbert class field by means of
employing the elliptic units $\epsilon(\cc)$ in a forthcoming research project.
The existence of such a basis yields a solution of certain Diophantine
equations, see \cite{Gaa02}. 

In a forthcoming paper we plan to generalize Algorithm~I to the case of complex
multiplication by an arbitrary order $\mathcal{O}$, see \cite{omerben}.  

Lastly it could be interesting to investigate the elements of Hilbert class
field, or more generally ring class fields, coming from relative norms over $H$
of elliptic units $\epsilon(\cc)$ constructed by Algorithm~I. This could
possibly yield other source of class invariants. These invariants could be used
in the applications of CM-theory such as primality proving, group and pairing
based cryptography, see for instance \cite{AtMr93}, \cite{Mor07}, \cite{BSS99},
\cite{BSS05} or \cite{FST06}.

{\small
\def\refname{References}
\newcommand{\etalchar}[1]{$^{#1}$}

\end{document}